\newcommand{\Z}{{\mathbb Z}} \newcommand{\Q}{{\mathbb Q}}
 \newcommand{\F}{{\mathbb F}}
\newcommand{\Hom}{\operatorname{Hom}\nolimits}
\newcommand{\GL}{\operatorname{GL}\nolimits}
\newcommand{\A}{\ifmmode{\mathcal{A}}\else${\mathcal{A}}$\fi}
\newcommand{\B}{\ifmmode{\mathcal{B}}\else${\mathcal{B}}$\fi}
\newcommand{\C}{\ifmmode{\mathcal{C}}\else${\mathcal{C}}$\fi}
\newcommand{\D}{\ifmmode{\mathcal{D}}\else${\mathcal{D}}$\fi}
\newcommand{\G}{\ifmmode{\mathcal{G}}\else${\mathcal{G}}$\fi}
\newcommand{\I}{\ifmmode{\mathcal{I}}\else${\mathcal{I}}$\fi}
\newcommand{\J}{\ifmmode{\mathcal{J}}\else${\mathcal{J}}$\fi}
\newcommand{\K}{\ifmmode{\mathcal{K}}\else${\mathcal{K}}$\fi}
\renewcommand{\O}{\ifmmode{\mathcal{O}}\else${\mathcal{O}}$\fi}
\renewcommand{\P}{\ifmmode{\mathcal{P}}\else${\mathcal{P}}$\fi}
\newcommand{\U}{\ifmmode{\mathcal{U}}\else${\mathcal{U}}$\fi}
\newcommand{\M}{\ifmmode{\mathcal{M}}\else${\mathcal{M}}$\fi}
\newcommand{\N}{\ifmmode{\mathcal{N}}\else${\mathcal{N}}$\fi}
\newcommand{\Ss}{\ifmmode{\mathcal{S}}\else${\mathcal{S}}$\fi}
\newcommand{\T}{\ifmmode{\mathcal{T}}\else${\mathcal{T}}$\fi}
\newcommand{\Ff}{\ifmmode{\mathcal{F}}\else${\mathcal{F}}$\fi}
\newcommand{\Ll}{\ifmmode{\mathcal{L}}\else${\mathcal{L}}$\fi}
\newtheorem{Thm}{Theorem}
\newtheorem{Prop}[Thm]{Proposition}
\newtheorem{Lem}[Thm]{Lemma}
\theoremstyle{definition}
\newtheorem{Ex}[Thm]{Example}
\theoremstyle{remark}
\newtheorem{Not}{Notation}
\author{Oihana Garaialde Oca\~{n}a}
\address{Mathematisches Institut ,
Heinrich-Heine Universit\"at D\"usseldorf, 
 40225, D\"usseldorf, Germany.
}
\email{oihana.garayalde@ehu.es}
\theoremstyle{plain}
\title{Cohomology of uniserial $p$-adic space groups with cyclic point group} 
\begin{document}

\maketitle

\begin{abstract}
Let $p$ be a fixed prime number and let $R$ denote a uniserial $p$-adic space group of dimension $d_x=(p-1)p^{x-1}$ and with cyclic point group of order $p^x$. In this short note we prove that all the quotients of $R$ of size bigger than or equal to $p^{d_x+x}$ have isomorphic mod $p$ cohomology groups. In particular, we show that the cohomology groups of sufficiently large quotients of the unique maximal class pro-$p$ group are isomorphic as $\F_p$-modules.
\end{abstract}

\section{Introduction}

Let $G$ be a $p$-group of order $p^n$ and nilpotency class $m$. Then, the coclass of $G$ is $n-m$. In \cite{Carlson}, J. F. Carlson shows that there are finitely many isomorphism types of cohomology algebras over the finite field $\F_2$ for $2$-groups of fixed coclass. His proof uses Leedham-Green's classification theorem \cite{LeedGreen}. In the same paper \cite{Carlson}, Carlson also conjectures that an analogous result should hold for odd primes and he claims that one of the key steps is to prove that all the quotients of the unique pro-$p$ group of maximal class have isomorphic cohomology algebras \cite[Question 6.1]{Carlson}. In \cite{DGG}, the authors  prove that there are finitely many algebras realizing such cohomology groups \cite[Proposition 5.7]{DGG} while in \cite{Graham}, G. Ellis proves that certain periodically constructed quotients of crystalographic groups have isomorphic mod-$p$ cohomology groups.

In this short note, we shall show that the cohomology of all `big enough' quotients of the unique pro-$p$ group of maximal class are isomorphic as graded $\F_p$-modules (see Proposition \ref{prop: pointgroupoforderp}). In fact, we will prove that all `big enough' quotients of uniserial $p$-adic space groups with cyclic point group are isomorphic as graded $\F_p$-modules.

More precisely, let $G$ denote the standard uniserial $p$-adic space group of dimension $d_x=(p-1)p^{x-1}$, that is, $R$ is a pro-$p$ group that fits into a split extension
\[
0\to T\to G\to W(x)\to1,
\]
where $T$ is the translation group 
\[
T:=\overset{p^{x-1}}\oplus K:= \overset{p^{x-1}}\oplus \Z_p^{p-1},
\]
and up to conjugation, $W(x)$ is the iterated wreath product,  
$$
W(x)=C_p\wr\overbrace{C_p\wr \dots\wr C_p}^{x-1}.
$$
called the point group of $G$.

Let $P=\langle \theta \rangle\cong C_{p^{x}}$
be a subgroup of $W(x)$ acting on $T$ by restricting the action of $W(x)$ to the cycle $\theta=(1\dots p^{x})$. The action of $\theta$ on $T$ is uniserial and for each $i \geq 0$, there exists a unique $\theta$-invariant subgroup $N_i \leq T$ of index $p^{i+d_x}$ (see \cite[Definition 4.2.10 and Proposition 4.2.11]{LeedGreen}). This family of subgroups satisfy the following properties: 
\begin{itemize}
\item[(a)] $N_0 = pT$,
\item[(b)] $[\theta, N_i]=N_{i+1}$, 
\item[(c)] $N_i \subset N_{i+1}$.
\end{itemize}
We shall use the following notation throuroght the manuscript.

\begin{Not}\label{Notation}
Let $R$ denote the uniserial $p$-adic space group with translation group $T$ and point group $P=C_{p^x}$. Let $T_i$ denote the quotient $T/N_i$, let $R_i$ be the quotient group $R/N_i$ and analogously, let $K_0$ denote the quotient $K/pK$. Note that $K_0\cong C_p^{p-1}$ and $T_0\cong C_p^{d_x}$. 

Also, let $\text{nil}(H^*(G;\F_p))$ denote the ideal generated by the nilpotent elements of $H^*(G;\F_p)$. Then, the quotient $H^*(G;\F_p)/\text{nil}(H^*(G;\F_p))$ is called the reduced (mod-$p$) cohomology and it is written by $H^*_{\text{red}}(G;\F_p)$.
\end{Not}

We prove the following result. 

\begin{Thm}\label{isoasmodules}
For all $i\geq 0$, there is an isomorphism
\[
H^*(R_i)\cong H^*(R_0)
\]
as graded $\F_p$-modules.
\end{Thm}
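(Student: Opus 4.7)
The plan is to apply the Lyndon--Hochschild--Serre spectral sequence to the split extension $1 \to T_i \to R_i \to P \to 1$, yielding
\[
E_2^{s,t}(R_i) = H^s(P; H^t(T_i;\F_p)) \;\Longrightarrow\; H^{s+t}(R_i;\F_p),
\]
and to show that its pages match those for $R_0$ as bigraded $\F_p$-modules.

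I would begin by fixing the cyclotomic presentation $T \cong \Z_p[\zeta_{p^x}]$ with $\theta\in P$ acting by multiplication by $\zeta_{p^x}$. Uniseriality forces $N_i = \pi^{d_x+i}T$ for the uniformizer $\pi=\zeta_{p^x}-1$, so $T_i \cong \Z_p[\zeta_{p^x}]/\pi^{d_x+i}$; the inclusion $N_i\subseteq pT$ then gives a canonical $P$-equivariant identification $T_i/pT_i\cong T_0$, so every $T_i$ is an abelian $p$-group of rank $d_x$, and the underlying graded $\F_p$-module $H^*(T_i;\F_p)$ is independent of $i$. To get the $\F_p P$-module structure to agree degree by degree, I use that $P=C_{p^x}$ is cyclic: the graded $\F_p$-dimension of $H^*(P;M)$ for any finite-dimensional $\F_p P$-module $M$ depends only on the numbers of free and of non-free indecomposable Krull--Schmidt summands of $M$. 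It suffices, therefore, to show that these two counts agree for $H^t(T_i;\F_p)$ and $H^t(T_0;\F_p)$ in every degree $t$. Degree $t=1$ is immediate from $H^1(T_i;\F_p)=(T_i/pT_i)^{\vee}\cong T_0^{\vee}$; for higher $t$ one would propagate via the exterior/polynomial description of the mod-$p$ cohomology of an abelian $p$-group, exploiting the $P$-equivariance of the Bockstein and of the socle inclusion $T_0\hookrightarrow T_i$ realised as $\pi^i T/\pi^{d_x+i}T$. This would give $E_2^{s,t}(R_i)\cong E_2^{s,t}(R_0)$ as bigraded $\F_p$-modules.

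To pass from $E_2$ to $E_\infty$, the split structure is key: the section $P\hookrightarrow R_i$ produces a retraction of inflation, so every class in $E_2^{*,0}$ is a permanent cycle and, in particular, every transgression $d_{t+1}\colon E_{t+1}^{0,t}\to E_{t+1}^{t+1,0}$ vanishes. A naturality argument along the projection $R_i\twoheadrightarrow R_0$ induced by $T_i\twoheadrightarrow T_0$ would then match the remaining differentials, forcing both spectral sequences to have $E_\infty$-pages of the same bigraded $\F_p$-dimension, which is enough for the claim.

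The principal obstacle lies in the comparison of $\F_p P$-modules: while the graded $\F_p$-dimension of $H^t(T_i;\F_p)$ is transparent and the degree-$1$ case is immediate, the invariant-factor Künneth splitting $T_i=\bigoplus\Z/p^{a_j}$ is \emph{not} $P$-equivariant, so the $P$-action on the polynomial generators of $H^*(T_i;\F_p)$ has to be extracted intrinsically from the cyclotomic ring structure $T_i=\Z_p[\zeta_{p^x}]/\pi^{d_x+i}$ together with the $P$-equivariance of the Bockstein; a secondary, but more routine, difficulty is verifying that the higher differentials on the $\F_p[f]$-part of $E_r$ coming from the fibre $T_i$ match those for $T_0$ under the naturality map.
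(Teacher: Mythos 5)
Your overall strategy --- compare the two Lyndon--Hochschild--Serre spectral sequences for $T_i\to R_i\to P$ and $T_0\to R_0\to P$ --- is reasonable in spirit, and your reduction of the $E_2$-comparison to counting free and non-free indecomposable $\F_p P$-summands of $H^t(T_i;\F_p)$ is correct for cyclic $P$. But the proof has two genuine gaps, and the first one is exactly where all the difficulty of the theorem lives. You never actually establish that $H^t(T_i;\F_p)$ and $H^t(T_0;\F_p)$ have the same summand counts for $t\geq 2$: as you yourself note, the invariant-factor splitting of $T_i$ is not $P$-equivariant, and the polynomial generators of $H^*(T_i;\F_p)$ are \emph{higher} Bocksteins (of order depending on the exponents of the cyclic factors), so "$P$-equivariance of the Bockstein'' does not determine the $P$-action on $H^{\geq 2}$. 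The paper resolves precisely this point by a different device: it rationalizes, setting $\tilde{T}=T\otimes_{\Z_p}\Q_p$ and $\tilde{T}_i=\tilde{T}/N_i$, and uses that $\delta=[\,\cdot\,,\theta]$ is a $\theta$-equivariant automorphism of the $\Q_p$-vector space $\tilde{T}$ carrying $N_i$ onto $N_{i+1}$, hence inducing a $\theta$-equivariant isomorphism $\tilde{T}_i\cong\tilde{T}_{i+1}$. (This map is not surjective on $T$ itself, which is why the rationalization is essential.) Combined with a $W(x)$-equivariant quasi-isomorphism $\overset{p^{x-1}}\otimes\Lambda^*(Y)\otimes C^*(\tilde{T}_i;\F_p)\to C^*(T_i;\F_p)$ splitting the cohomology into its nilpotent and reduced parts, this produces an honest $\theta$-equivariant zig-zag of quasi-isomorphisms between cochain models of $T_i$ and $T_{i+1}$ --- which is what your sketch is missing.

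The second gap is the passage from $E_2$ to $E_\infty$. An abstract bigraded isomorphism of $E_2$-pages does not match the higher differentials; your splitting argument only kills differentials landing in the bottom row, and says nothing about, e.g., $d_2\colon E_2^{s,t}\to E_2^{s+2,t-1}$ for $t\geq 2$. The proposed naturality argument along $R_i\twoheadrightarrow R_0$ cannot repair this, because the induced comparison map on $E_2$ is inflation $H^*(T_0;\F_p)\to H^*(T_i;\F_p)$, which annihilates the polynomial (reduced) part of $H^*(T_0;\F_p)$ as soon as the cyclic factors of $T_i$ have exponent $>p$; naturality along a map that is far from an isomorphism on $E_2$ gives no control over where the differentials differ. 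This is why the paper does not compare spectral sequences term by term at all: it invokes \cite[Lemma 2.1]{DGG} applied to a commutative square of $\theta$-equivariant quasi-isomorphisms of differential graded $\F_p$-modules, which transports the \emph{entire} equivariant cochain-level data (hence all differentials at once) and yields $H^*(R_i)\cong H^*(R_{i+1})$ as graded $\F_p$-modules. To salvage your approach you would need either such a cochain-level equivariant comparison or an independent argument forcing degeneration/matching of all higher differentials, neither of which is routine.
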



\section{Proof of the Theorem}

Let $G$ be the standard uniserial $p$-adic space group of dimension $d_x=p^{x-1}(p-1)$ with translation group $T=\overset{p^{x-1}}\oplus K:= \overset{p^{x-1}}\oplus \Z_p^{p-1}$ and point group 
\[
W(x)=C_p\wr\overbrace{C_p\wr \dots\wr C_p}^{x-1}.
\]
Recall that the left-most copy of $C_p$ acts via the companion matrix of the polynomial $y^{p-1}+y^{p-2}+\dots+y+1$ and the remaining $x-1$ copies of $C_p$ act by permutation matrices as $C_p\wr \overset{x-1}{\dots}\wr C_p$ is the Sylow $p$-subgroup of $\Sigma_{p^{x-1}}$ \cite{LeedGreen}. Let $P=\langle \theta \rangle\cong C_{p^{x}}$ be a subgroup of $W(x)$ acting on $T$ by restricting the action of $W(x)$ to the cycle $\theta=(1\dots p^{x})$. We shall prove Theorem \ref{isoasmodules}.

\begin{proof} Let $Y=\Hom(K/pK,\F_p)=\Hom(K_0,\F_p)$. By abusing the notation, let $\Lambda^* (Y)$  denote both the exterior algebra and the differential graded algebra equipped with the zero differential. There is a morphism of differential graded modules
\[
\varphi_o  :\Lambda^* (Y) \longrightarrow C^* (K_0; \F_p ),
\]
described in \cite[Page 7]{Taelman} or in \cite[Section 5.1]{DGG}, that induces an isomorphism in the ideal of nilpotent elements of the cohomology algebra $H^*(K_0;\F_p)$ (see \cite[Lemma 5.3]{DGG}). Following Notation \ref{Notation} above and for all $i\geq 0$, let 
\[
\varphi_i \colon \overset{p^{x-1}}\otimes\Lambda^*(Y) \to C^*(T_i;\F_p)
\]
be a cochain map defined by the composition of the following morphisms:
\[
\overset{p^{x-1}}\otimes\Lambda^*(Y)\overset{\otimes \varphi_o}\rightarrow \overset{p^{x-1}}\otimes C^* (K_0; \F_p ) \overset{\eta}\to C^*(T_0;\F_p)\overset{\text{inf}}\rightarrow C^*(T_i;\F_p), 
\]
where the inflation map $\text{inf}$ is induced by considering the projections $\pi_i: T_i \to T/pT$  and $\eta$ is the product in the Kunneth formula. Note that $\eta$ induces an isomorphism in cohomology since we are working over a filed \cite[pages 18, 32]{LEvens91} and that $\text{inf}$ induces an isomorphism in the ideal generated by the nilpotent elements. Thus, $\varphi_i$ induces an isomorphism in the ideal generated by the nilpotent elements of $H^*(T_i;\F_p)$. Moreover, as $\text{inf}$ is defined at the level of groups, it is $W(x)$-invariant and by \cite[Proposition 5.10]{DGG} $\otimes \varphi_o$ is also $W(x)$-invariant. It remains to show that $\eta$ is also $W(x)$-invariant. To that aim, we shall show that for $q=(a_1, \dots, a_{p^{x-1}}, \sigma)\in W(x)$, the following equality holds:
\[
q \cdot \eta(f)(z)=\eta(q \cdot f)(z),
\]
for $f \in \overset{p^{x-1}}\otimes C^m(K_0;\F_p)$ 
and $m \geq 0$. On the one hand,
\begin{align*}
\eta(q \cdot f)(z)&= \eta(q \cdot (1 \otimes \cdots \otimes f_i \otimes \cdots \otimes 1))(z_0, \dots, z_m)\\
&=\eta(\sigma\cdot (1 \otimes \cdots \otimes p_i \cdot f_i \otimes \cdots \otimes 1))\\
&=a_i \cdot f_i(z_{0,\sigma(i)}, \dots, z_{m, \sigma(i)}))=f_i(a_i^{-1}z_{0,\sigma(i)}, \dots, a_i^{-1}z_{m,\sigma(i)}).
\end{align*}

On the other hand, 
\begin{align*}
q\cdot \eta(f)(z)&=q \cdot f_i(z_{0,i}, \dots, z_{m,i})=\sigma \cdot a_i\cdot f_i(z_{0,i}, \dots, z_{m,i})\\
&=\sigma \cdot f_i(a_i^{-1}z_{0,i}, \dots, a_i^{-1}z_{m,i})=f_i(a_i^{-1}z_{0,\sigma(i)}, \dots, a_i^{-1}z_{m,\sigma(i)}).
\end{align*}
Thus, $\eta$ is also $W(x)$-invariant and hence, $\varphi_i$ is $W(x)$-invariant. 

Consider the $\Q_p$-vector space $\tilde{T}=T\otimes_{\Z_p}\Q_p$ and notice that the action of $W(x)$ on $T$ extends to $\tilde{T}$.  For all $i\geq 0$, the group $\tilde{T}_i:=\tilde{T}/N_i$ is isomorphic to the $d_x$-fold product of the Pr\"uffer group $C^{d_x}_{p^{\infty}}=C_{p^{\infty}}\times\overset{d_x}\cdots\times C_{p^{\infty}}$, where $C_{p^{\infty}}$ is the direct limit of all the cyclic groups $C_{p^n}$. We define, for each $i\geq 0$, a morphism of differential graded algebras
$$\psi_i: C^* (\tilde{T}_i,\F_p)\longrightarrow  C^* (T_i,\F_p),$$
induced by the natural  inclusion $T_i\to \tilde{T}_i$. This cochain map commutes with $W(x)$ and gives an isomorphism in the reduced part of the cohomology ring (see \cite[page 17]{DGG}). Then, the morphism
\begin{equation}
\label{morab}
\varphi_i\otimes \psi_i: \overset{p^{x-1}}\otimes\Lambda^* (Y)\otimes C^* (\tilde{T}_i;\F_p)\longrightarrow C^* (T_i; \F_p ) \otimes C^* (T_i; \F_p ) \stackrel{\cup}\to C^* (T_i; \F_p ),
\end{equation}
is a $W(x)$-invariant quasi-isomorphism of differential graded $\F_p$-modules. In particular, $\varphi_i \otimes \psi_i$ is $\theta$-invariant.

Finally, we shall show that there is a quasi-isomorphism between 
$$
\overset{p^{x-1}}\otimes \Lambda(Y)\otimes C^*(\tilde{T}_i) \; \; \text{and} \; \; \overset{p^{x-1}}\otimes \Lambda(Y)\otimes C^*(\tilde{T}_{i+1})
$$
for all $i$, to prove the result. Define a map $\delta: \tilde{T} \to \tilde{T}$ that sends an element $a \in \tilde{T}$ to $[a, \theta]=a \theta a^{-1} \theta^{-1}$. Note that $\delta$ is an injective linear map between $p^{d_x}$-dimensional $\Q_p$-vector spaces since $\theta$ acts uniserially and in particular, faithfully on $\tilde{T}$. Hence, $\delta$ is an isomorphism. Also, the equality
$$[\theta , a]^\theta=[\theta ,a^\theta],$$
shows that $\delta$ is $\theta$-invariant. Let $\delta_i$ denote the restriction of $\delta$ to $T_i$. Since $\delta_i(N_i)=[N_i,\theta]=N_{i+1}$, there is a commutative diagram
\begin{equation}
\xymatrix{
\tilde{T} \ar[d] \ar[r]^{\delta} &\tilde{T} \ar[d] \\
\tilde{T}_i \ar[r]^{\tilde{\delta}_i} &\tilde{T}_{i+1},
}
\end{equation}
where $\tilde{\delta}_i$ is an isomorphism. Furthermore,
\begin{equation}
\label{mordelta}
\text{id}\otimes \tilde{\delta}^*_i: \overset{p^{x-1}}\otimes\Lambda^* (Y)\otimes C^* (\tilde{T}_{i+1},\F_p) \longrightarrow \overset{p^{x-1}}\otimes\Lambda^*(Y)\otimes C^* (\tilde{T}_i,\F_p),
\end{equation}
 is an isomorphism of differential graded modules that commutes with the action of $\theta$. Applying \cite[Lemma 2.1]{DGG} in the following diagram
 \begin{equation*}
\xymatrix{
 \overset{p^{x-1}}\otimes \Lambda(Y) \otimes C^*(\tilde{T}_i;\F_p) \ar[rr]^{\varphi_i\otimes \psi_i} & &C^*(T_i;\F_p)\\
 \overset{p^{x-1}}\otimes \Lambda(Y) \otimes C^*(\tilde{T}_{i+1};\F_p) \ar[u]^{\text{id}\otimes\delta_i^*} \ar[rr]^{\; \; \; \; \; \varphi_{i+1}\otimes \psi_{i+1}} & &C^*(T_{i+1};\F_p)
 }
 \end{equation*}
 where all the maps are $\theta$-invariant, we obtain that for all $i \geq 0$,
\[
H^*(R_i)\cong H^*(R_{i+1})
\]
as graded $\F_p$-modules.
\end{proof}

In particular, if $d_x=p-1$, then $R$ is the unique pro-$p$ group of maximal nilpotency class. That is, $R=\Z_p^{p-1}\rtimes C_p$ where $C_p$ acts on $\Z_p^{p-1}$ via the matrix
\begin{small}\begin{equation}
\label{eq:companiontheta}
M=\begin{pmatrix}
1 &0 & \cdots &0 &  -{p \choose 1}\\
1 &1 & \cdots &0 & -{p \choose 2} \\
0 &1 & \cdots &0 & -{p \choose 3} \\
\vdots & \vdots & &\vdots &\vdots \\
0 &0 &\cdots &1 & 1-{p \choose p-1}
\end{pmatrix} \in \GL_{p-1}(\Z).
\end{equation}\end{small}An explicit construction of $R$ can be obtained as follows: Let $\theta$ be a primitive $p^{\text{th}}$ root of unity and let $\Q_p[\theta]$ be the $p^{\text{th}}$ local cyclotomic field with ring of integers $\Z_p[\theta]$. Consider the semi-direct product $C_p\ltimes\Z_p[\theta]$, where the generator $x$ of $C_p$ acts on $\Z_p[\theta]$ by multiplication by $\theta$. Here, $\Z_p[\theta]$ is a $\Z_p$-module of rank $p-1$. This action extends to the $\Q_p$-vector space $\Q_p[\theta]$. We claim that $C_p\ltimes\Z_p[\theta]$ is the pro-$p$ group of maximal nilpotency class. That is, $C_p\ltimes\Z_p[\theta]$ is a uniserial $p$-adic space group of coclass one. We shall show that there exists a $C_p$-invariant filtration $\{T_i\}_{i\in \Z}$ such that $|T_i:T_{i+1}|=p$ and $pT_i=T_{i+p-1}$ for all $i\geq 1$. 

Let $a\in \Q_p[\theta]$ and $x\in C_p$. Then,
\begin{equation}
\label{eq1}
x\cdot a=\theta a=(1+\theta -1)a=a+(\theta -1)a.
\end{equation} 
Since the minimal polynomial for $\theta$ is $(X^p-1)/(X-1)$, it follows that the minimal polynomial for $\theta -1$ is 
\begin{equation}\label{eq: minimalpolyforthetaminusone}
\frac{((X+1)^p-1)}{X}=X^{p-1}+{p \choose 1}X^{p-2}+\ldots +{p \choose p-2}X+{p \choose p-1},
\end{equation}
and the companion matrix for $1-\theta$ is the following one
\begin{equation}
\label{eq:companiontheta-1}
\begin{pmatrix}
0 &0 & \cdots &0 &  -{p \choose 1}\\
1 &0 & \cdots &0 & -{p \choose 2} \\
0 &1 & \cdots &0 & -{p \choose 3} \\
\vdots & \vdots & &\vdots &\vdots \\
0 &0 &\cdots &1 & -{p \choose p-1}
\end{pmatrix} \in \GL_{p-1}(\Z).
\end{equation}
This combined with \eqref{eq1} yields that the action of $\theta$ is given by the matrix
\begin{equation}
\label{eq:companiontheta}
M=\begin{pmatrix}
1 &0 & \cdots &0 &  -{p \choose 1}\\
1 &1 & \cdots &0 & -{p \choose 2} \\
0 &1 & \cdots &0 & -{p \choose 3} \\
\vdots & \vdots & &\vdots &\vdots \\
0 &0 &\cdots &1 & 1-{p \choose p-1}
\end{pmatrix} \in \GL_{p-1}(\Z),
\end{equation}
for an appropriate basis of $\Q_p[\theta]$. 

Consider the ideal $\mathbf{p}=(\theta)$ and construct a chain of ideals
\[
\mathbf{p}\supset \mathbf{p}^2\supset \dots \supset \mathbf{p}^i\supset \dots,
\]
where $|\mathbf{p}^i: \mathbf{p}^{i+1}|=p$ for all $i\geq 1$. It is readily checked that each $\mathbf{p}^i$ is $(\theta-1)$-invariant and in particular, $\theta$-invariant. Moreover, using the relation
\[
(\theta-1)^{p-1}=-{p \choose 1}(\theta-1)^{p-2}-\ldots -{p \choose p-2}(\theta-1)-{p \choose p-1},
\]
obtained from Equation \eqref{eq: minimalpolyforthetaminusone}, we have that $p\mathbf{p}^i=\mathbf{p}^{i+p-1}$ for all $i\geq 1$. Then, $T_i=\mathbf{p}^i$ gives the uniserial filtration.



Hence, $\langle \theta\rangle$ acts uniserially on $\Q_p[\theta]$ and on $\Z_p[\theta]$ and thus, $\Z_p[\theta]\rtimes C_p$ is the unique pro-$p$ group of maximal nilpotency class. In fact, for all $i\in \Z$ the semidirect product $\langle \theta\rangle \ltimes T_i$ is also the unique maximal class pro-$p$ group and for each $n\geq 1$, $T_{-n}/T_0\rtimes \langle \theta\rangle $ is the unique quotient of $T_0\rtimes C_p$ of size $p^{n+1}$. Now, \cite[Proposition 5.8]{DGG} gives the following result.

\begin{Prop}\label{prop: pointgroupoforderp}For all $n\geq p-1$, there is an isomorphism of graded $\F_p$-modules
\[
H^*(T_{-n}/T_0\rtimes \langle \theta\rangle,\F_p)\cong H^*(T_{-p+1}/T_0\rtimes \langle \theta\rangle ,\F_p).
\] 
\end{Prop}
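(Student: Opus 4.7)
The plan is to recognize the proposition as a direct specialization of Theorem \ref{isoasmodules} to the case $x = 1$, so that the entire content is already supplied by the main theorem and only an identification of notation is required.

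When $x = 1$ one has $d_x = p - 1$ and point group $\langle \theta \rangle \cong C_p$, and the paragraph preceding the proposition identifies the resulting uniserial $p$-adic space group with the unique maximal class pro-$p$ group. I would fix the presentation $R = T_0 \rtimes \langle \theta \rangle$, where $T_0 = \Z_p[\theta]$. The subgroup $N_i$ of Notation \ref{Notation} is by definition the unique $\theta$-invariant subgroup of $T_0$ of index $p^{i + p - 1}$; matching this with the uniserial filtration $\{T_j\}_{j \geq 0}$ forces $N_i = T_{i + p - 1}$, so that
\[
R_i \;=\; T_0 / T_{i + p - 1} \rtimes \langle \theta \rangle, \qquad |R_i| = p^{i + p}.
\]

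Next I would use the uniqueness statement from the same preceding paragraph: for each $m \geq 1$, the group $T_{-m}/T_0 \rtimes \langle \theta \rangle$ is the unique quotient of $R$ of size $p^{m + 1}$. Setting $m = i + p - 1$, which is at least $p - 1$ exactly when $i \geq 0$, gives $R_i \cong T_{-(i + p - 1)} / T_0 \rtimes \langle \theta \rangle$. Writing $n = i + p - 1$ this reads $R_i \cong T_{-n}/T_0 \rtimes \langle \theta \rangle$, with the special case $i = 0$ (equivalently $n = p - 1$) giving $R_0 \cong T_{-p + 1}/T_0 \rtimes \langle \theta \rangle$. Theorem \ref{isoasmodules} then provides $H^*(R_i, \F_p) \cong H^*(R_0, \F_p)$ for every $i \geq 0$, which is precisely the asserted isomorphism.

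The only potential obstacle is the identification of $\{R_i\}$ with the family $\{T_{-n}/T_0 \rtimes \langle \theta \rangle : n \geq p - 1\}$, but this is immediate from the uniqueness of finite quotients of fixed order noted in the text, and no further computation enters the argument.
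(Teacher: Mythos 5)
Your argument is correct and is essentially the route the paper takes: the text immediately preceding the proposition performs exactly your identification of $T_{-n}/T_0\rtimes\langle\theta\rangle$ (for $n\geq p-1$) with the finite quotients of the maximal class pro-$p$ group, via the shift isomorphisms $T_i/T_j\cong T_{i+k}/T_{j+k}$ and the uniqueness of the quotient of each order. The only divergence is the final citation: the paper closes by invoking \cite[Proposition 5.8]{DGG}, whereas you invoke Theorem~\ref{isoasmodules} with $x=1$ (so $d_x=p-1$, $N_i=T_{i+p-1}$, and $R_i=T_0/T_{i+p-1}\rtimes\langle\theta\rangle$ of order $p^{i+p}$). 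Your derivation is the more self-contained of the two and is precisely what the introduction announces; both rest on the same underlying isomorphism of graded $\F_p$-modules, so nothing is lost either way.
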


More generally, let $\zeta$ denote a $(p^x)^{\text{th}}$ root of unity and let $\zeta$ act on $\Q_p[\zeta]$ by multiplication by $\zeta$. Then, it can be shown that $\Q_p[\zeta]\rtimes \langle\zeta\rangle$ is a uniserial $p$-adic space group of dimension $(p-1)p^x$ \cite{LeedGreen}. The next result shows that $\Z_p[\zeta]\to \Z_p[\zeta]\rtimes C_{p^x}\to C_{p^x}$ and the uniserial $p$-adic space group with cyclic point group $T\to R\to P=C_{p^x}$ are isomorphic. 

\begin{Lem} There exists a unique uniserial $p$-adic space group with cyclic point group.

\end{Lem}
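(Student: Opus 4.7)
The plan is to reduce the claim to classifying $T$ as a $\Z_p[\theta]$-module and then to show that its isomorphism type is forced by the $\Z_p$-rank and the faithfulness of the action. Since every uniserial $p$-adic space group of this shape is a split extension $T\rtimes\langle\theta\rangle$, its isomorphism class is determined by the $\Z_p[\theta]$-module $T$, and the explicit model $\Z_p[\zeta]\rtimes C_{p^x}$ constructed above provides a candidate representative against which to compare.

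I would start at the rational level. Over $\Q_p$, the polynomial $X^{p^x}-1$ factors into pairwise coprime irreducibles as $\prod_{k=0}^{x}\Phi_{p^k}(X)$, so $\tilde T:=T\otimes_{\Z_p}\Q_p$ splits as $\bigoplus_{k\in S}V_k$, where $S\subseteq\{0,1,\dots,x\}$ and each $V_k$ is a vector space over $\Q_p(\zeta_{p^k})$ on which $\theta$ acts as $\zeta_{p^k}$. Faithfulness of $\theta$ forces $x\in S$; the inequality $\dim_{\Q_p}V_x\geq (p-1)p^{x-1}=\dim_{\Q_p}\tilde T$ then leaves no room for other summands, so $\tilde T$ is one-dimensional over $\Q_p(\zeta)=\Q_p(\zeta_{p^x})$.

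Next I would descend to the lattice. The subring $\Z_p[\theta]\subseteq\End_{\Z_p}(T)$ is isomorphic to $\Z_p[\zeta]$, which is the ring of integers of $\Q_p(\zeta)$ and hence a complete discrete valuation ring. The lattice $T$ inside the one-dimensional $\Q_p(\zeta)$-space $\tilde T$ is a finitely generated torsion-free $\Z_p[\zeta]$-module, hence free, and comparing $\Z_p$-ranks forces rank one. Choosing a $\Z_p[\zeta]$-basis identifies $(T,\theta)$ with $(\Z_p[\zeta],\zeta\cdot)$ and produces $R\cong\Z_p[\zeta]\rtimes C_{p^x}$; the uniserial filtration $\{T_i\}$ is then automatically recovered as the powers of the maximal ideal $(\zeta-1)\subseteq\Z_p[\zeta]$. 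The only substantive step is the dimension count that excludes every cyclotomic summand other than $\Phi_{p^x}$; everything else is a formal consequence of $\Z_p[\zeta]$ being a discrete valuation ring, so I do not anticipate a genuine obstacle beyond the bookkeeping of the cyclotomic decomposition.
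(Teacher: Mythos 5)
Your classification of the module $T$ is correct and in fact more careful than the paper's own argument: the paper stops at the rational level (there is a unique faithful irreducible $\Q_p$-representation of $C_{p^x}$, of degree $(p-1)p^{x-1}$), whereas you also descend to the lattice, observing that $\Z_p[\zeta]$ is a complete discrete valuation ring so that any $\theta$-stable full lattice in the one-dimensional $\Q_p(\zeta)$-space is free of rank one over $\Z_p[\zeta]$, hence isomorphic to $\Z_p[\zeta]$ itself. That step genuinely pins down $T$ and not merely $\tilde T=T\otimes_{\Z_p}\Q_p$, and is a worthwhile addition.

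However, there is a genuine gap at the very first sentence: you assume that ``every uniserial $p$-adic space group of this shape is a split extension $T\rtimes\langle\theta\rangle$,'' but a $p$-adic space group is a priori only an extension $0\to T\to R\to C_{p^x}\to 1$, and the splitting is precisely the first half of what the lemma's proof must establish. A class in $H^2(C_{p^x};T)$ could in principle obstruct your reduction to classifying the module $T$. The paper proves splitness by taking a set-theoretic section $s$, setting $\eta=s(\theta)$, noting that $\eta^{p^x}$ generates a normal subgroup of $R$ contained in $T$, and invoking the fact that $R$ is just infinite to conclude $\eta^{p^x}=1$. Alternatively, once you know $\theta$ acts on $\tilde T$ with minimal polynomial $\Phi_{p^x}$, you could argue that $1$ is not an eigenvalue, so $T^{\theta}=0$ and hence $H^2(C_{p^x};T)\cong T^{\theta}/N_{\theta}T=0$, which forces the extension to split. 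Either argument would close the gap, but some argument is required; as written, the proposal silently assumes the harder half of the statement.
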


\begin{proof}
We start by showing that the extension $T\to R\overset{\pi}\to C_{p^x}$ splits. Let $s=\pi^{-1}$ denote a section for the extension. It suffices to show that $\pi\circ s=\text{id}_{C_{p^x}}$ holds. Let $\langle\theta\rangle:=C_{p^{x}}$ and put $\eta:=s(\theta)$ with $\langle\eta\rangle\leq G$. Note that $\pi(\eta^{p^x})=\theta^{p^x}=1$ and thus, $\langle\eta^{p^x}\rangle\leq T$. 

It is readily checked that $\eta^{p^x}$ commutes with $\theta$ and thus, $\langle\eta^{p^x}\rangle\unlhd G$. Now, by \cite[Theorem 7.4.2]{LeedGreen} and \cite[Lemma 8.1]{Shalev94}, $R$ is just infinite. That is, the non-trivial normal subgroups have finite index in $R$. Hence, $\langle \eta^{p^x}\rangle=\{1\}$ and thus, $s$ is a homomorphism such that $\pi \circ s=\text{id}_{C_{p^x}}$ holds. 

It remains to show that the split extension $R=T\rtimes C_{p^x}$ is unique. Note that the faithful irreducible complex representations of $C_{p^x}$ are given by $x\to \theta^{i}$ with $0\leq i<p^x$, where $\theta$ denotes a primitive $(p^{x})^{\text{th}}$ root of unity. Since $\Q_p$ contains no $p^{\text{th}}$ roots of unity, there is a unique faithful irreducible representation of $C_{p^x}$ over $\Q_p$ \cite[Theorem 10.1.13]{LeedGreen} and its minimum polynomial of $\theta$ over $\Q_p$ is the cyclotomic polynomial 
\[
\Phi(x)=x^{(p-1)p^{x-1}}+\dots +1,
\]
which has degree $(p-1)p^{x-1}$. Equivalently, $\tilde{T}$ is the unique indecomposable $\Q_p[C_{p^x}]$-module of rank $(p-1)p^x$. In particular, $R=T\rtimes C_{p^x}$ is the unique uniserial $p$-adic space group of dimension $(p-1)p^x$ with cyclic point group. 
\end{proof}

\begin{Ex} Let $G=(\Z_3\times\Z_3)\rtimes C_3$ denote the pro-$3$ group of maximal nilpotency class, where $C_3$ acts via the integral matrix,
\begin{equation}\label{integralmatrixactionforp3}
M=
\begin{pmatrix}
1 &-3\\
1 &-2
\end{pmatrix}\in \GL_2(\Z).
\end{equation}
Let $\{B(3,r)\}_{r\geq 3}$ denote the family of $3$-groups of maximal class obtained by taking quotients of $G$ described in \cite[Appendix A]{DRV}. Then, $B(3,r)$ fit into the extensions
\[
(C_3^k \times C_3^k)\rtimes C_3 \text{ or } (C_3^k \times C_3^{k-1}) \rtimes C_3,
\]
if $r=2k+1$ or $r=2k$, respectively. By Proposition \ref{prop: pointgroupoforderp}, there is an isomorphism of $\F_3$-modules
\[
H^*(B(3,3);\F_3)\cong H^*(B(3,r);\F_3),
\]
for all $r\geq 3$.
\end{Ex}

\end{document}